\documentclass[11pt,draft]{article}
\usepackage{amsmath,amscd}
\usepackage{amssymb,latexsym,amsthm}
\usepackage{color}
\usepackage[spanish,english]{babel}
\usepackage{amssymb}
\usepackage{color}
\usepackage{amsmath,amsthm,amscd}
\usepackage[latin1]{inputenc}
\usepackage{lscape}
\usepackage{amsfonts}


\numberwithin{equation}{section}
\newtheorem{theorem}{Theorem}[section]

\newtheorem{proposition}[theorem]{Proposition}

\theoremstyle{definition}

\newtheorem{definition}[theorem]{Definition}
\newtheorem{examples}[theorem]{Examples}

\newtheorem{remark}[theorem]{Remark}

\newcommand{\cA}{\mbox{${\cal A}$}}

\newcommand{\cU}{\mbox{${\cal U}$}}

\hoffset-0.3in
\voffset-1.3cm \setlength{\oddsidemargin}{9mm}
\setlength{\textheight}{21.3cm}\setlength{\textwidth}{16.5cm}

\title{\textbf{Skew Poincar\'e-Birkhoff-Witt extensions\\ over weak $\Sigma$-rigid rings}}
\author{Armando Reyes\footnote{Departamento de  Matem\'aticas. e-mail: mareyesv@unal.edu.co} \\ Universidad Nacional de Colombia, Sede Bogot\'a, Colombia\\ H\'ector Su\'arez\footnote{Escuela de Matem\'aticas y Estad\'istica. e-mail: hector.suarez@uptc.edu.co} \\ Universidad Pedag\'ogica y Tecnol\'ogica de Colombia, Sede Tunja, Colombia}
\date{}
\begin{document}
\maketitle
\begin{abstract}
\noindent In this paper we introduce the notion of weak $\Sigma$-rigid ring which extends $\alpha$-rigid rings and $\Sigma$-rigid rings defined for Ore extensions and skew PBW extensions, respectively. We also present the notion of weak $\Sigma$-skew Armendariz ring which extends that of $\alpha$-skew Armendariz ring. In this way we generalize several results in the literature, from Ore extensions of injective type to the more general setting of skew PBW extensions.

\bigskip

\noindent \textit{Key words and phrases.} Rigid rings, Armendariz rings, skew PBW extensions.

\bigskip

\noindent 2010 \textit{Mathematics Subject Classification:} 16S36, 16T20, 16S30.
\bigskip

\end{abstract}
\section{Introduction}\label{introduccion}
In commutative algebra, a ring $B$ is called {\em Armendariz} (the term was introduced by Rege and Chhawchharia in \cite{RegeChhawchharia1997}), if whenever polynomials $f(x)=a_0+a_1x+\dotsb + a_nx^n$, $g(x)=b_0+b_1x+\dotsb + b_mx^m\in B[x]$ such that  $f(x)g(x)=0$, then $a_ib_j=0$, for every $i,j$. The interest of this notion lies in its natural and its useful role in understanding the relation between the annihilators of the ring $B$ and the annihilators of the polynomial ring $B[x]$. As a matter of fact, in  \cite{Armendariz1974}, Lemma 1, Armendariz showed that a  reduced ring (a ring has no nonzero nilpotent elements) always satisfies this condition. It is well known that reduced rings are abelian (i.e., every idempotent is central). Now, following \cite{LiuZhao2006}, a ring $B$ is called {\em weak Armendariz}, if whenever two polynomials $p(x)=\sum_{i=0}^{s} a_ix^{i}$ and $q(x)=\sum_{j=0}^{t} b_jx^{j}$ of the polynomial ring $B[x]$ satisfy $pq=0$, then $a_ib_j$ is a nilpotent element of $B$, for each $i, j$. \\

In the context of Ore extensions introduced by Ore in \cite{Ore1933}, for $\alpha$ an endomorphism of a ring $B$, Hong et al. \cite{HongKimKwak2003} called $B$ an $\alpha$-{\em skew Armendariz ring}, if for two elements $p=\sum_{i=0}^{s} a_ix^{i},\ q=\sum_{j=0}^{t} b_jx^{j}$ of the Ore extension of endomorphism type $B[x;\alpha]$, $pq = 0 \Rightarrow a_i\sigma^{i}(b_j)=0$, for every $i, j$. As a generalization of the $\alpha$-skew Armendariz rings, Ouyang \cite{Ouyang2008} defined the {\em weak} $\alpha$-{\em skew Armendariz rings} in the following way: a ring $B$ is said to be {\em weak}  $\alpha$-{\em skew Armendariz rings}, if whenever two polynomials $p=\sum_{i=0}^{s} a_ix^{i},\ q=\sum_{j=0}^{t} b_jx^{j}$ of $B[x;\alpha]$, $pq=0 \Rightarrow  a_i\sigma^{i}(b_j)$ is a nilpotent element of $B$, for all $i, j$. It is clear that weak $\alpha$-skew Armendariz rings are more general than weak Armendariz rings and $\alpha$-skew Armendariz rings.\\

On the other hand, following Krempa \cite{Krempa1996}, an endomorphism $\alpha$ of a ring $B$ is called {\em rigid}, if $a\alpha(a)=0\Rightarrow a=0$, for $a\in B$. $B$ is called $\alpha$-rigid, if there exists a rigid endomorphism $\alpha$ of $B$. It is known that any rigid endomorphism of a ring is injective and $\alpha$-rigid rings are reduced (see Hong et al \cite{HongKimKwak2000}). Several properties of $\alpha$-rigid rings have been established in the literature (c.f. \cite{Krempa1996}, \cite{HongKimKwak2000}, and see \cite{Reyes2015} for detailed references). With this definition in mind, Ouyang \cite{Ouyang2008} defined {\em weak} $\alpha$-{\em rigid rings} which are a generalization of $\alpha$-rigid rings. More precisely, if $\alpha$ is en endomorphism of a ring $B$, $B$ is said to be {\em weak} $\alpha$-{\em rigid}, if $a\alpha(a)\in {\rm nil}(R) \Leftrightarrow a\in {\rm nil}(B)$, where ${\rm nil}(B)$ is the set of nilpotent elements of $B$. Ouyang \cite{Ouyang2008}, Proposition 2.2, showed that $B$ is $\alpha$-rigid if and only if $B$ is weak $\alpha$-rigid and reduced. In this way, weak $\alpha$-rigid rings are a generalization of $\alpha$-rigid rings deleting the condition to be reduced.\\

With the aim of extending the above two notions introduced by Ouyang in \cite{Ouyang2008} to a more general setting than Ore extensions, in this paper we focus on the kind of noncommutative rings known in the literature as skew Poincar\'e-Birkhoff-Witt extensions  (briefly, skew PBW extensions). These objects were introduced by Gallego and Lezama in \cite{LezamaGallego2011} and contain strictly Ore extensions of injective type (i.e., when $\sigma$ is an injective endomorphism of $R$; see Example \ref{mentioned} for different noncommutative rings which are skew PBW extensions but they can not be expressed as Ore extensions). As a matter of fact,  skew PBW extensions generalize several families of noncommutative rings defined in the literature, and include as particular rings different examples of remarkable algebras appearing in mathematical physics, representation theory, Hopf algebras, quantum groups, Lie algebras, and others. Briefly, next we mention some of these families of algebras (see \cite{Reyes2013PhD} and \cite{LezamaReyes2014} for a detailed reference of every one of these families): (i) universal enveloping algebras of finite dimensional Lie algebras; (ii) PBW extensions introduced by Bell and Goodearl; (iii) almost normalizing extensions defined by McConnell and Robson; (iv) sol\-va\-ble polynomial rings introduced by Kandri-Rody and Weispfenning; (v) diffusion algebras studied by Isaev, Pyatov, and Rittenberg; (vi) 3-dimensional skew polynomial algebras introduced by Bell and Smith; (vii) the regular graded algebras studied by Kirkman, Kuzmanovich, and Zhang; (viii) different algebras with PBW bases of polynomial type. The greatest difference of skew PBW extensions with respect all these algebras is that the coefficients do not necessarily commute with the variables, and these coefficients are not necessarily elements of fields (see Definition \ref {gpbwextension} below). Due to this fact, the skew PBW extensions contain well-known groups of algebras such as some types of $G$-algebras studied by Levandovskyy and some PBW algebras defined by Bueso et. al., (both $G$-algebras and PBW algebras take coefficients in fields and assume that coefficientes commute with variables), Auslander-Gorenstein rings, some Calabi-Yau and skew Calabi-Yau algebras, some Artin-Schelter regular algebras, some Koszul algebras, quantum polynomials, some quantum universal enveloping algebras (see \cite{LezamaReyes2014}, \cite{ReyesSuarez2017FEJM}, \cite{ReyesSuarezMomento2017}, \cite{SuarezLezamaReyes2017}, \cite{SuarezReyes2017JP} and \cite{SuarezReyes2017FJMS} for a considerable list of examples of all these algebras). As we see, skew PBW extensions include a lot of noncommutative rings, which means that a theory extending the two notions above for these extensions will establish general results for a lot of noncommutative rings much more general than Ore extensions, and hence we contribute to the study of properties of noncommutative algebras. Precisely, this kind of thinking has been presented a few years ago in several works (c.f.  \cite{ReyesSuarezClifford2017}, \cite{ReyesSuarezskewCY2017}, \cite{ReyesSuarezUMA2018} and \cite{ReyesSuarezYesica2018}).\\

The paper is organized as follows: In Section \ref{definitionexamplesspbw} we establish some useful results about skew PBW extensions for the rest of the paper. Section \ref{weakSigmarigid} contains the first concept of the paper, the {\em weak} $\Sigma$-{\em rigid rings} (Definition \ref{weaksigmarigidring}). These rings are a generalization of weak $\alpha$-rigid rings introduced by Ouyang \cite{Ouyang2008} and $\Sigma$-rigid rings defined by the first author in \cite{Reyes2015}. However, as we will see in Theorem \ref{2008Proposition2.2}, weak $\Sigma$-rigid rings and $\Sigma$-rigid rings coincide when the ring is assumed to be reduced. Different results of \ref{weakSigmarigid} are presented in this section. In Section \ref{weakSigmaskewArmendariz} we present the second concept of this paper, the {\em weak} $\Sigma$-{\em skew Armendariz rings} (Definition \ref{weakSigmaskewArmendariz}) which are a generalization of weak $\alpha$-skew Armendariz rings defined by Ouyang \cite{Ouyang2008} and $\Sigma$-skew Armendariz rings introduced by the first author in \cite{ReyesSuarez2016UIS}. We prove that when $R$ is a NI ring (a ring $B$ is called NI ring, if the set ${\rm nil}(B)$ of nilpotent elements of $B$ forms an ideal of $B$), if $R$ is weak $\Sigma$-rigid ring, then $R$ is a weak $\Sigma$-skew Armendariz ring (Theorem \ref{2008Theorem3.3}). We present an example which illustrates the importance of the condition NI (if we do not assume this fact, then there exist examples of rings which are weak $\Sigma$-rigid but not weak $\Sigma$-skew Armendariz, see Remark \ref{2008Example3.4}). Finally, Section \ref{futurework} presents some ideas for a future work concerning the objects introduced in this article.\\

Throughout the paper, the word ring means a ring (not necessarily commutative) with unity. $\mathbb{C}$ will denote the field of complex numbers.
\section{Skew PBW extensions}\label{definitionexamplesspbw}
In this section we establish some useful results about skew PBW extensions for the rest of the paper.
\begin{definition}[\cite{LezamaGallego2011}, Definition 1]\label{gpbwextension}
Let $R$ and $A$ be rings. We say that $A$ is a {\em skew PBW extension  over} $R$, which is denoted by $A:=\sigma(R)\langle
x_1,\dots,x_n\rangle$, if the following conditions hold:
\begin{enumerate}
\item[\rm (i)]$R\subseteq A$;
\item[\rm (ii)]there exist elements $x_1,\dots ,x_n\in A$ such that $A$ is a left free $R$-module, with basis ${\rm Mon}(A):= \{x^{\alpha}=x_1^{\alpha_1}\cdots
x_n^{\alpha_n}\mid \alpha=(\alpha_1,\dots ,\alpha_n)\in
\mathbb{N}^n\}$,  and $x_1^{0}\dotsb x_n^{0}:=1\in {\rm Mon}(A)$.

\item[\rm (iii)]For each $1\leq i\leq n$ and any $r\in R\ \backslash\ \{0\}$, there exists an element $c_{i,r}\in R\ \backslash\ \{0\}$ such that $x_ir-c_{i,r}x_i\in R$.
\item[\rm (iv)]For any elements $1\leq i,j\leq n$, there exists $c_{i,j}\in R\ \backslash\ \{0\}$ such that $x_jx_i-c_{i,j}x_ix_j\in R+Rx_1+\cdots +Rx_n$.
\end{enumerate}
\end{definition}
\begin{proposition}[\cite{LezamaGallego2011}, Proposition
3]\label{sigmadefinition}
Let $A$ be a skew PBW extension over $R$. For each $1\leq i\leq
n$, there exist an injective endomorphism $\sigma_i:R\rightarrow
R$ and an $\sigma_i$-derivation $\delta_i:R\rightarrow R$ such that $x_ir=\sigma_i(r)x_i+\delta_i(r)$, for  each $r\in R$. From now on, we write  $\Sigma:=\{\sigma_1,\dotsc, \sigma_n\}$, and $\Delta:=\{\delta_1,\dotsc, \delta_n\}$.
\end{proposition}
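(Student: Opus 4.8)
The plan is to read $\sigma_i$ and $\delta_i$ straight out of condition (iii) of Definition \ref{gpbwextension} and then verify all the ring-theoretic identities by comparing coordinates in the free left $R$-module $A$. Fix $1\leq i\leq n$. For each nonzero $r\in R$, condition (iii) gives $x_ir-c_{i,r}x_i\in R$, so I would set $\sigma_i(r):=c_{i,r}$ and $\delta_i(r):=x_ir-c_{i,r}x_i\in R$, together with $\sigma_i(0):=0$ and $\delta_i(0):=0$; this produces, by construction, the identity $x_ir=\sigma_i(r)x_i+\delta_i(r)$ for every $r\in R$. The point that must be checked first is that these are well-defined \emph{functions}: by condition (ii) the set ${\rm Mon}(A)$ is a left $R$-basis of $A$ and in particular $1$ and $x_i$ are distinct basis elements, so every element of $R+Rx_i$ has a unique expression $a\cdot 1 + b\cdot x_i$ with $a,b\in R$. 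Hence the element $c_{i,r}$ appearing in (iii), and consequently $\delta_i(r)$, is uniquely determined by $r$.

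Next I would verify the algebraic properties, in each case expanding a product of $x_i$ with an element of $R$, collecting the coefficients of $1$ and of $x_i$, and invoking the uniqueness just established. Additivity of $\sigma_i$ and $\delta_i$ follows by equating the two sides of $x_i(r+s)=x_ir+x_is$. For the multiplicative behaviour I would compute
\[
x_i(rs)=(x_ir)s=\bigl(\sigma_i(r)x_i+\delta_i(r)\bigr)s=\sigma_i(r)\sigma_i(s)x_i+\sigma_i(r)\delta_i(s)+\delta_i(r)s,
\]
and compare with $x_i(rs)=\sigma_i(rs)x_i+\delta_i(rs)$; uniqueness of coordinates then yields simultaneously $\sigma_i(rs)=\sigma_i(r)\sigma_i(s)$ and $\delta_i(rs)=\sigma_i(r)\delta_i(s)+\delta_i(r)s$, the latter being exactly the defining identity for a $\sigma_i$-derivation. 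The relation $x_i\cdot 1=x_i$ gives $\sigma_i(1)=1$ and $\delta_i(1)=0$, so $\sigma_i$ is a ring endomorphism of $R$. Finally, injectivity of $\sigma_i$ is immediate from (iii) itself: if $r\neq 0$ then $c_{i,r}\in R\setminus\{0\}$, i.e.\ $\sigma_i(r)\neq 0$, so $\ker\sigma_i=0$.

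I do not expect a genuine obstacle here; the proof is essentially an application of uniqueness of representation in a free module. The only places needing a little care are the well-definedness step — which is precisely where the freeness of $A$ as a left $R$-module from (ii) is indispensable — and the minor bookkeeping of handling $r=0$ separately, since condition (iii) is stated only for nonzero $r$.
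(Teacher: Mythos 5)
Your proof is correct and is essentially the argument given for this result in the cited source (\cite{LezamaGallego2011}, Proposition 3): one reads $\sigma_i(r):=c_{i,r}$ and $\delta_i(r):=x_ir-c_{i,r}x_i$ off condition (iii), uses the freeness of $A$ over ${\rm Mon}(A)$ from condition (ii) to get well-definedness and to compare coefficients of $1$ and $x_i$, and deduces additivity, multiplicativity, the $\sigma_i$-derivation identity, and injectivity exactly as you do. No gaps.
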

\begin{definition}[\cite{LezamaGallego2011}, Definition 4]\label{sigmapbwderivationtype}
Let $A$ be a skew PBW extension over $R$.
\begin{enumerate}
\item[\rm (i)] $A$ is called \textit{quasi-commutative} if the conditions {\rm(}iii{\rm)} and {\rm(}iv{\rm)} in Definition \ref{gpbwextension} are replaced by the following: (iii') for each $1\leq i\leq n$ and all $r\in R\ \backslash\ \{0\}$, there exists $c_{i,r}\in R\ \backslash\ \{0\}$ such that $x_ir=c_{i,r}x_i$; (iv') for any $1\leq i,j\leq n$, there exists $c_{i,j}\in R\ \backslash\ \{0\}$ such that $x_jx_i=c_{i,j}x_ix_j$.
\item[\rm (ii)] $A$ is called \textit{bijective}, if $\sigma_i$ is bijective for each $1\leq i\leq n$, and $c_{i,j}$ is invertible, for any $1\leq i<j\leq n$.
\item[\rm (iii)] $A$ is called of {\em endomorphism type}, if $\delta_i=0$, for every $i$.  In addition, if every $\sigma_i$ is bijective, $A$ is a skew PBW extension of {\em automorphism type}.
\end{enumerate}
\end{definition}
\begin{examples}\label{mentioned}
If $R[x_1;\sigma_1,\delta_1]\dotsb [x_n;\sigma_n,\delta_n]$ is an iterated Ore extension where
\begin{itemize}
\item $\sigma_i$ is injective, for $1\le i\le n$;
\item $\sigma_i(r)$, $\delta_i(r)\in R$, for every $r\in R$ and $1\le i\le n$;
\item $\sigma_j(x_i)=cx_i+d$, for $i < j$, and $c, d\in R$, where $c$ has a left inverse;
\item $\delta_j(x_i)\in R + Rx_1 + \dotsb + Rx_n$, for $i < j$,
\end{itemize}
then $R[x_1;\sigma_1,\delta_1]\dotsb [x_n;\sigma_n, \delta_n] \cong \sigma(R)\langle x_1,\dotsc, x_n\rangle$ (\cite{LezamaReyes2014}, p. 1212). Note that skew PBW extensions of endomorphism type are more general than iterated Ore extensions $R[x_1;\sigma_1]\dotsb [x_n;\sigma_n]$, and in general, skew PBW extensions are more general than Ore extensions of injective type. More precisely, next we show that there are noncommutative rings which are skew PBW extensions but they can not be expressed as iterated Ore extensions as we will next (see \cite{Reyes2013PhD},  \cite{LezamaReyes2014} or \cite{ReyesSuarezClifford2017} for the reference of every example). Examples of these extensions appearing in noncommutative algebraic geometry and theoretical physics can be found in  \cite{ReyesSuarez2017FEJM}, \cite{ReyesSuarezMomento2017}, \cite{ReyesSuarezskewCY2017}, \cite{ReyesSuarezYesica2018}, \cite{SuarezReyes2017JP} and \cite{SuarezReyes2017FJMS}.
 
\begin{enumerate}
\item[\rm (a)] Let $k$ be a commutative ring and $\mathfrak{g}$ a finite dimensional Lie algebra over $k$ with basis $\{x_1,\dots ,x_n\}$. The \textit{universal enveloping algebra} of $\mathfrak{g}$, denoted $\cU(\mathfrak{g})$, is a skew PBW extension over $k$, since $x_ir-rx_i=0$, $x_ix_j-x_jx_i=[x_i,x_j]\in \mathfrak{g}=k+kx_1+\cdots+kx_n$, $r\in k$, for $1\leq i,j\leq n$. In particular, the \textit{universal enveloping algebra} \textit{of a Kac-Moody Lie algebra} is a skew PBW extension over a polynomial ring.
\item [\rm (b)] The \textit{universal enveloping ring} $\cU(V,R ,\Bbbk)$, where  $R$ is a $\Bbbk$-algebra, and $V$ is a $\Bbbk$-vector space which is also a Lie ring containing $R$ and $\Bbbk$ as Lie ideals with suitable relations. The enveloping ring $\cU(V,R,\Bbbk)$ is a finite skew PBW extension over $R$ if ${\rm dim}_\Bbbk\ (V/R)$ is finite.
\item [\rm (c)] Let $k$, $\mathfrak{g}$, $\{x_1,\dots ,x_n\}$ and $\cU(\mathfrak{g})$ be as in the previous example; let $R$ be a $k$-algebra containing $k$. The \textit{tensor product} $A:=R\ \otimes_k\ \cU(\mathfrak{g})$ is a skew PBW extension over $R$, and it is a particular case of \textit{crossed product} $R*\cU(\mathfrak{g})$ of $R$ by $\cU(\mathfrak{g})$, which is a skew PBW extension over $R$.
\item [\rm (d)] The \textit{twisted or smash product differential operator ring} $R\ \# _{\sigma}\  \cU(\mathfrak{g})$, where $\mathfrak{g}$ is a finite-dimensional Lie algebra acting on $R$ by derivations, and $\sigma$ is Lie 2-cocycle with values in $R$.
\item [\rm (e)] Diffusion algebras arise in physics as a possible way to understand a large class of $1$-dimen\-sional stochastic process. A \textit{diffusion algebra} $\cA$ with parameters $a_{ij}\in \mathbb{C}\ \backslash\ \{0\}$ for $1\le i, j\le n$, is an algebra over $\mathbb{C}$ generated by variables $x_1,\dotsc,x_n$ subject to relations $a_{ij}x_ix_j-b_{ij}x_jx_i=r_jx_i-r_ix_j$, whenever $i<j$, $b_{ij}, r_i\in \mathbb{C}$ for all $i<j$. $\cA$ admits a $PBW$-basis of standard monomials $x_1^{i_1}\dotsb x_n^{i_n}$, that is, $\cA$ is a diffusion algebra if these standard monomials are a $\mathbb{C}$-vector space basis for $\cA$. From Definition \ref{gpbwextension}, (iii) and (iv), it is clear that the family of skew PBW extensions are more general than diffusion algebras.  We will denote $q_{ij}:=\frac{b_{ij}}{a_{ij}}$. The parameter $q_{ij}$ can be a root of unity if and only if is equal to 1. It is therefore reasonable to assume that these parameters not to be a root of unity other than 1. If all coefficients $q_{ij}$ are nonzero, then the corresponding diffusion algebra have a PBW basis of standard monomials $x_1^{i_1}\dotsb x_n^{i_n}$, and hence these algebras are skew PBW extensions. More precisely, $\cA$ is a skew PBW extension over $\mathbb{C}$ with indetermnates $x_1,\dotsc,x_n$.
\end{enumerate}
\end{examples}
\begin{definition}[\cite{LezamaGallego2011}, Definition 6]\label{definitioncoefficients}
Let $A$ be a skew PBW extension over $R$. Then:
\begin{enumerate}
\item[\rm (i)]for $\alpha=(\alpha_1,\dots,\alpha_n)\in \mathbb{N}^n$,
$\sigma^{\alpha}:=\sigma_1^{\alpha_1}\cdots \sigma_n^{\alpha_n}$,
$|\alpha|:=\alpha_1+\cdots+\alpha_n$. If
$\beta=(\beta_1,\dots,\beta_n)\in \mathbb{N}^n$, then
$\alpha+\beta:=(\alpha_1+\beta_1,\dots,\alpha_n+\beta_n)$.
\item[\rm (ii)]For $X=x^{\alpha}\in {\rm Mon}(A)$,
$\exp(X):=\alpha$, $\deg(X):=|\alpha|$, and $X_0:=1$. The symbol $\succeq$ will denote a total order defined on ${\rm Mon}(A)$ (a total order on $\mathbb{N}^n$). For an
 element $x^{\alpha}\in {\rm Mon}(A)$, ${\rm exp}(x^{\alpha}):=\alpha\in \mathbb{N}^n$.  If
$x^{\alpha}\succeq x^{\beta}$ but $x^{\alpha}\neq x^{\beta}$, we
write $x^{\alpha}\succ x^{\beta}$. Every element $f\in A$ can be expressed uniquely as $f=a_0 + a_1X_1+\dotsb +a_mX_m$, with $a_i\in R$, and $X_m\succ \dotsb \succ X_1$ (eventually, we will use expressions as $f=a_0 + a_1Y_1+\dotsb +a_mY_m$, with $a_i\in R$, and $Y_m\succ \dotsb \succ Y_1$). With this notation, we define ${\rm
lm}(f):=X_m$, the \textit{leading monomial} of $f$; ${\rm
lc}(f):=a_m$, the \textit{leading coefficient} of $f$; ${\rm
lt}(f):=a_mX_m$, the \textit{leading term} of $f$; ${\rm exp}(f):={\rm exp}(X_m)$, the \textit{order} of $f$; and
 $E(f):=\{{\rm exp}(X_i)\mid 1\le i\le t\}$. Note that $\deg(f):={\rm max}\{\deg(X_i)\}_{i=1}^t$. Finally, if $f=0$, then
${\rm lm}(0):=0$, ${\rm lc}(0):=0$, ${\rm lt}(0):=0$. We also
consider $X\succ 0$ for any $X\in {\rm Mon}(A)$. For a detailed description of monomial orders in skew PBW  extensions, see \cite{LezamaGallego2011}, Section 3.
\end{enumerate}
\end{definition}
\begin{proposition}[\cite{Reyes2015}, Proposition 2.9]  \label{lindass}
If $\alpha=(\alpha_1,\dotsc, \alpha_n)\in \mathbb{N}^{n}$ and $r$ is an element of $R$, then  
\begin{align*}
x^{\alpha}r = &\ x_1^{\alpha_1}x_2^{\alpha_2}\dotsb x_{n-1}^{\alpha_{n-1}}x_n^{\alpha_n}r = x_1^{\alpha_1}\dotsb x_{n-1}^{\alpha_{n-1}}\biggl(\sum_{j=1}^{\alpha_n}x_n^{\alpha_{n}-j}\delta_n(\sigma_n^{j-1}(r))x_n^{j-1}\biggr)\\
+ &\ x_1^{\alpha_1}\dotsb x_{n-2}^{\alpha_{n-2}}\biggl(\sum_{j=1}^{\alpha_{n-1}}x_{n-1}^{\alpha_{n-1}-j}\delta_{n-1}(\sigma_{n-1}^{j-1}(\sigma_n^{\alpha_n}(r)))x_{n-1}^{j-1}\biggr)x_n^{\alpha_n}\\
+ &\ x_1^{\alpha_1}\dotsb x_{n-3}^{\alpha_{n-3}}\biggl(\sum_{j=1}^{\alpha_{n-2}} x_{n-2}^{\alpha_{n-2}-j}\delta_{n-2}(\sigma_{n-2}^{j-1}(\sigma_{n-1}^{\alpha_{n-1}}(\sigma_n^{\alpha_n}(r))))x_{n-2}^{j-1}\biggr)x_{n-1}^{\alpha_{n-1}}x_n^{\alpha_n}\\
+ &\ \dotsb + x_1^{\alpha_1}\biggl( \sum_{j=1}^{\alpha_2}x_2^{\alpha_2-j}\delta_2(\sigma_2^{j-1}(\sigma_3^{\alpha_3}(\sigma_4^{\alpha_4}(\dotsb (\sigma_n^{\alpha_n}(r))))))x_2^{j-1}\biggr)x_3^{\alpha_3}x_4^{\alpha_4}\dotsb x_{n-1}^{\alpha_{n-1}}x_n^{\alpha_n} \\
+ &\ \sigma_1^{\alpha_1}(\sigma_2^{\alpha_2}(\dotsb (\sigma_n^{\alpha_n}(r))))x_1^{\alpha_1}\dotsb x_n^{\alpha_n}, \ \ \ \ \ \ \ \ \ \ \sigma_j^{0}:={\rm id}_R\ \ {\rm for}\ \ 1\le j\le n.
\end{align*}
\end{proposition}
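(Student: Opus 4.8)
The plan is to prove Proposition \ref{lindass} by induction on $n$, the number of variables, reducing at each stage to the single-variable commutation rule $x_i r = \sigma_i(r) x_i + \delta_i(r)$ from Proposition \ref{sigmadefinition}. The base case $n=1$ amounts to showing $x_1^{\alpha_1} r = \sum_{j=1}^{\alpha_1} x_1^{\alpha_1-j}\delta_1(\sigma_1^{j-1}(r))x_1^{j-1} + \sigma_1^{\alpha_1}(r)x_1^{\alpha_1}$, which itself is handled by a secondary induction on $\alpha_1$: applying $x_1 r = \sigma_1(r)x_1 + \delta_1(r)$ once peels off one factor of $x_1$, and reindexing the resulting sum gives the claimed formula. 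This is the computational heart of the argument, but it is a routine telescoping manipulation.

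For the inductive step, I would write $x^{\alpha} r = x_1^{\alpha_1}\dotsb x_{n-1}^{\alpha_{n-1}}\,(x_n^{\alpha_n} r)$, apply the one-variable case to $x_n^{\alpha_n} r$ to obtain
\[
x_n^{\alpha_n} r = \sum_{j=1}^{\alpha_n} x_n^{\alpha_n-j}\delta_n(\sigma_n^{j-1}(r))x_n^{j-1} + \sigma_n^{\alpha_n}(r)\,x_n^{\alpha_n},
\]
and then observe that the first (summation) term already has its $R$-coefficient $\delta_n(\sigma_n^{j-1}(r))$ sitting to the left of $x_n^{\alpha_n-j}$ wait — more precisely, one must still move $x_1^{\alpha_1}\dotsb x_{n-1}^{\alpha_{n-1}}$ past each coefficient. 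For the summation term, the coefficient $\delta_n(\sigma_n^{j-1}(r))$ lies between $x_n^{\alpha_n-j}$ and $x_n^{j-1}$, so it is already in the position displayed in the statement; nothing further is required there. For the surviving term $\sigma_n^{\alpha_n}(r)\,x_n^{\alpha_n}$, I apply the induction hypothesis (in $n-1$ variables) to $x_1^{\alpha_1}\dotsb x_{n-1}^{\alpha_{n-1}}\cdot\sigma_n^{\alpha_n}(r)$, which produces exactly the cascade of terms in lines two through five of the statement, each involving $\delta_i$ composed with $\sigma_i^{j-1}$ and then with the tail $\sigma_{i+1}^{\alpha_{i+1}}\dotsb\sigma_n^{\alpha_n}$ applied to $r$, plus the final fully-twisted term $\sigma_1^{\alpha_1}(\sigma_2^{\alpha_2}(\dotsb(\sigma_n^{\alpha_n}(r))))x^{\alpha}$.

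The main obstacle I anticipate is purely bookkeeping: keeping track of which composition of $\sigma_i$'s is applied to $r$ inside each $\delta_i$-term, and verifying that the positions of the monomial factors $x_i^{\alpha_i-j}$ and $x_i^{j-1}$ around each coefficient match the statement. The key point that makes this work cleanly is that $\delta_i$ is a $\sigma_i$-derivation and $\sigma_i,\delta_i$ only act on $R$ (they do not produce higher-degree monomials in the $x_k$), so each application of the one-variable rule strictly lowers the relevant exponent and no cross terms between distinct variables are generated beyond the nested $\sigma$-twists. I would present the base case in detail, then give the inductive step as above, noting that matching the displayed formula term-by-term is a direct (if lengthy) comparison; I would not grind through the full reindexing in the write-up, citing instead that it follows by a straightforward induction as in \cite{Reyes2015}.
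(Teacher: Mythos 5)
Your proposal is correct and follows essentially the same route as the source of this result (\cite{Reyes2015}, Proposition 2.9): induction on the number of variables, with the base case $x_1^{\alpha_1}r=\sum_{j=1}^{\alpha_1}x_1^{\alpha_1-j}\delta_1(\sigma_1^{j-1}(r))x_1^{j-1}+\sigma_1^{\alpha_1}(r)x_1^{\alpha_1}$ handled by a secondary induction on the exponent via $x_1r=\sigma_1(r)x_1+\delta_1(r)$, and the inductive step applying the $(n-1)$-variable formula to $x_1^{\alpha_1}\dotsb x_{n-1}^{\alpha_{n-1}}\sigma_n^{\alpha_n}(r)$ and multiplying on the right by $x_n^{\alpha_n}$. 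Your observation that the $\delta_n$-summands need no further normalization because the statement leaves their coefficients sandwiched between powers of $x_n$ is exactly right.
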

\begin{remark}[\cite{Reyes2015}, Remark 2.10)]\label{juradpr}
About Proposition \ref{lindass}, we have the following observation: If $X_i:=x_1^{\alpha_{i1}}\dotsb x_n^{\alpha_{in}}$ and $Y_j:=x_1^{\beta_{j1}}\dotsb x_n^{\beta_{jn}}$, then when we compute every summand of $a_iX_ib_jY_j$ we obtain pro\-ducts of the coefficient $a_i$ with several evaluations of $b_j$ in $\sigma$'s and $\delta$'s depending of the coordinates of $\alpha_i$. This assertion follows from the expression:
\begin{align*}
a_iX_ib_jY_j = &\ a_i\sigma^{\alpha_{i}}(b_j)x^{\alpha_i}x^{\beta_j} + a_ip_{\alpha_{i1}, \sigma_{i2}^{\alpha_{i2}}(\dotsb (\sigma_{in}^{\alpha_{in}}(b_j)))} x_2^{\alpha_{i2}}\dotsb x_n^{\alpha_{in}}x^{\beta_j} \\
+ &\ a_i x_1^{\alpha_{i1}}p_{\alpha_{i2}, \sigma_3^{\alpha_{i3}}(\dotsb (\sigma_{{in}}^{\alpha_{in}}(b_j)))} x_3^{\alpha_{i3}}\dotsb x_n^{\alpha_{in}}x^{\beta_j} \\
+ &\ a_i x_1^{\alpha_{i1}}x_2^{\alpha_{i2}}p_{\alpha_{i3}, \sigma_{i4}^{\alpha_{i4}} (\dotsb (\sigma_{in}^{\alpha_{in}}(b_j)))} x_4^{\alpha_{i4}}\dotsb x_n^{\alpha_{in}}x^{\beta_j}\\
+ &\ \dotsb + a_i x_1^{\alpha_{i1}}x_2^{\alpha_{i2}} \dotsb x_{i(n-2)}^{\alpha_{i(n-2)}}p_{\alpha_{i(n-1)}, \sigma_{in}^{\alpha_{in}}(b_j)}x_n^{\alpha_{in}}x^{\beta_j} \\
+ &\ a_i x_1^{\alpha_{i1}}\dotsb x_{i(n-1)}^{\alpha_{i(n-1)}}p_{\alpha_{in}, b_j}x^{\beta_j}.
\end{align*}
\end{remark}
\section{Weak $\Sigma$-rigid rings}\label
{weakSigmarigid}
For a ring $B$ with a ring endomorphism $\sigma:B\to B$, an $\sigma$-derivation $\delta:B\to B$, considering the Ore extension $B[x;\sigma,\delta]$, Krempa in \cite{Krempa1996} defined $\sigma$ as a  {\em rigid endomorphism} if $b\sigma(b)=0$ implies $b=0$, for $b\in B$. Krempa called $B$ $\sigma$-rigid if there exists a rigid endomorphism $\sigma$ of $B$. Since Ore extensions of injective type are particular examples of skew PBW  extensions, the first author introduced the following definition with the purpose of studying the notion of {\em rigidness} for this more general setting. 
\begin{definition}[\cite{Reyes2015}, Definition 3.2] \label{generaldef2015}
Let $B$ be a ring and $\Sigma$ a family of endomorphisms of $B$. $\Sigma$ is called a {\em rigid endomorphisms family} if $r\sigma^{\alpha}(r)=0$ implies $r=0$, for every $r\in B$ and $\alpha\in \mathbb{N}^n$. A ring $B$ is called to be $\Sigma$-{\em rigid} if there exists a rigid endomorphisms family $\Sigma$ of $B$.
\end{definition}
Note that if $\Sigma$ is a rigid endomorphisms family, then every element $\sigma_i\in \Sigma$ is a monomorphism. In fact, $\Sigma$-rigid rings are reduced rings: if $B$ is a $\Sigma$-rigid ring and $r^2=0$ for $r\in B$, then $0=r\sigma^{\alpha}(r^2)\sigma^{\alpha}(\sigma^{\alpha}(r))=r\sigma^{\alpha}(r)\sigma^{\alpha}(r)\sigma^{\alpha}(\sigma^{\alpha}(r))=r\sigma^{\alpha}(r)\sigma^{\alpha}(r\sigma^{\alpha}(r))$, i.e., $r\sigma^{\alpha}(r)=0$ and so $r=0$, that is, $B$ is reduced (note that there exists an endomorphism of a reduced ring which is not a rigid endomorphism, see \cite{HongKimKwak2000}, Example 9). With this in mind, we consider the family of injective endomorphisms $\Sigma$ and the family $\Delta$ of $\Sigma$-derivations in a skew PBW extension $A$ over a ring $R$ (see Proposition \ref{sigmadefinition}). Remarkable examples of $\Sigma$-rigid rings can be found in \cite{ReyesSuarezUMA2018}, Examples 3.3, \cite{Reyes2018}, Examples 2.9 or \cite{ReyesSuarezYesica2018}, Example 2. \\

Now, following the ideas presented by Ouyang \cite{Ouyang2008} for Ore extensions, we present the following definition which extends $\Sigma$-rigid rings.
\begin{definition}\label{weaksigmarigidring}
Let $\Sigma=\{\sigma_1,\dotsc,\sigma_n\}$ and $\Delta=\{\delta_1,\dotsc,\delta_n\}$ be a family of endomorphisms and $\Sigma$-derivations of $R$, respectively. $R$ is called a {\em weak} $\Sigma$-{\em rigid ring}, if $a\sigma^{\theta}(a)\in {\rm nil}(R)\Leftrightarrow a\in {\rm nil}(B)$, for each element $a\in R$ and every $\theta\in \mathbb{N}^{n}$.
\end{definition}
\begin{remark}
It is clear that $\Sigma$-rigid rings are weak $\Sigma$-rigid. However, the converse is false as we can appreciated in the following example taken from \cite{Ouyang2008}, Example 2.1. Let $\sigma$ be an endomorphism of a ring $R$ which is an $\sigma$-rigid ring. Consider the ring
\[
R_3:=\biggl\{\begin{pmatrix}
a & b & c \\ 0 & a & d \\ 0 & 0 & a
\end{pmatrix} \mid a, b, c\in R
   \biggr\}.
\]
If we extend the endomorphism $\sigma$ of $R$ to the endomorphism $\overline{\sigma}:R_3\to R_3$ defined by $\overline{\sigma}(a_{ij}) = (\sigma(a_{ij}))$, then $R_3$ is a weak $\overline{\sigma}$-rigid ring but $R_3$ is not $\overline{\sigma}$-rigid. Therefore, weak $\Sigma$-rigid rings are a generalization of $\Sigma$-rigid rings to the case where the ring of coefficients is not assumed to be reduced (note that the ring $R_3$ is not reduced).
\end{remark}

The next theorem gives an equivalence between the notions of $\Sigma$-rigid rings and weak $\Sigma$-rigid rings. This result extends \cite{Ouyang2008}, Proposition 2.2.
\begin{theorem}\label{2008Proposition2.2}
Let $\Sigma=\{\sigma_1,\dotsc,\sigma_n\}$ and $\Delta=\{\delta_1,\dotsc,\delta_n\}$ be a family of endomorphisms and $\Sigma$-derivations of $R$, respectively. $R$ is $\Sigma$-rigid if and only if $R$ is weak $\Sigma$-rigid and reduced.
\end{theorem}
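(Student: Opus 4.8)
The plan is to prove both implications directly from the definitions. For the forward direction, suppose $R$ is $\Sigma$-rigid. First I would recall the observation already made in the text that $\Sigma$-rigid rings are reduced; this disposes of the ``reduced'' half of the conclusion immediately. It remains to check that $R$ is weak $\Sigma$-rigid, i.e.\ that $a\sigma^{\theta}(a)\in {\rm nil}(R)\Leftrightarrow a\in {\rm nil}(R)$ for every $a\in R$ and every $\theta\in\mathbb{N}^n$. Since $R$ is reduced, ${\rm nil}(R)=\{0\}$, so this equivalence collapses to $a\sigma^{\theta}(a)=0\Leftrightarrow a=0$, which is exactly the defining property of a rigid endomorphisms family (Definition \ref{generaldef2015}) together with the trivial implication $a=0\Rightarrow a\sigma^{\theta}(a)=0$. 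Hence the forward direction is essentially a restatement once reducedness is in hand.

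For the converse, assume $R$ is weak $\Sigma$-rigid and reduced. Again reducedness gives ${\rm nil}(R)=\{0\}$, so the weak $\Sigma$-rigid condition $a\sigma^{\theta}(a)\in{\rm nil}(R)\Leftrightarrow a\in{\rm nil}(R)$ becomes precisely $a\sigma^{\theta}(a)=0\Leftrightarrow a=0$. The nontrivial direction of this, namely $a\sigma^{\theta}(a)=0\Rightarrow a=0$ for all $\theta\in\mathbb{N}^n$, is exactly the statement that $\Sigma$ is a rigid endomorphisms family, so $R$ is $\Sigma$-rigid by definition. Thus the converse is also immediate once one notes that reducedness forces the set of nilpotents to be zero.

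The only genuine content in the argument is the equivalence ``reduced $\Leftrightarrow$ ${\rm nil}(R)=\{0\}$'', which is simply the definition of a reduced ring, so there is no real obstacle here: the theorem is true essentially by unwinding the two definitions once the word ``reduced'' is invoked. If one wanted to make the writeup look less trivial, one could spell out the elementary manipulation (already displayed in the paragraph preceding Definition \ref{weaksigmarigidring}) showing that $r^2=0$ together with the rigid condition forces $r=0$, thereby re-deriving reducedness from scratch rather than quoting it; but since that fact is already established in the text, I would just cite it. The main point to be careful about is not to over-complicate: the equivalence is a formal consequence of the fact that in a reduced ring the two sides of both biconditionals are equalities with zero, matching the rigid-family condition verbatim.
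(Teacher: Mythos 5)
Your proof is correct and follows essentially the same route as the paper: both directions reduce to the observation that reducedness forces ${\rm nil}(R)=\{0\}$, after which the weak $\Sigma$-rigid biconditional collapses to the rigid-family condition $a\sigma^{\theta}(a)=0\Leftrightarrow a=0$. The paper simply spells out the same unwinding element by element rather than stating the collapse of ${\rm nil}(R)$ up front.
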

\begin{proof}
Suppose that $R$ is $\Sigma$-rigid. As we saw above, $R$ is reduced. Let us see that $R$ is weak $\Sigma$-rigid. If $a\in {\rm nil}(R)$, then $a=0$, since $R$ is reduced, whence $a\sigma^{\theta}(a)=0\in {\rm nil}(R)$, for all $\theta\in \mathbb{N}^{n}$ and $1\le i\le n$. Now, if $a\sigma^{\theta}(a)\in {\rm nil}(R)$, for $a\in R$ and every $\theta\in \mathbb{N}^{n}$, then $a\sigma^{\theta}(a)=0$, for all $\theta\in\mathbb{N}^{n}$, since $R$ is reduced, and hence $a=0$ because $R$ is $\Sigma$-rigid. Then $R$ is weak $\Sigma$-rigid and reduced.

Conversely, suppose that $R$ is weak $\Sigma$-rigid and reduced, and let $a\sigma^{\theta}(a)=0$, for $a\in R$ and $\theta\in \mathbb{N}^{n}$. Then $a\in {\rm nil}(R)$, since $R$ is weak $\Sigma$-rigid, and so $a=0$ because $R$ is reduced. Therefore $R$ is $\Sigma$-rigid.
\end{proof}
The next proposition extends \cite{Ouyang2008}, Proposition 2.3 (compare also with \cite{Reyes2015}, Lemma 3.3).
\begin{proposition}\label{2008Proposition2.3}
If $R$ is a NI ring which is weak $\Sigma$-rigid, then we have the following assertions:
\begin{enumerate}
\item [\rm (1)] If $ab\in {\rm nil}(R)$, then $a\sigma^{\alpha}(b), \sigma^{\beta}(a)b\in {\rm nil}(R)$, for every elements $\alpha, \beta\in \mathbb{N}^{n}$.
\item [\rm (2)] If $\sigma^{\alpha}(a)b\in {\rm nil}(B)$, for some element $\alpha\in \mathbb{N}^{n}$, then $ab, ba\in {\rm nil}(R)$.
\item [\rm (3)] If $a\sigma^{\alpha}(b)\in {\rm nil}(B)$, for some element $\alpha\in \mathbb{N}^{n}$, then $ab, ba\in {\rm nil}(R)$.
\end{enumerate}
\end{proposition}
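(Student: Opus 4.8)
The plan is to reduce everything to the defining bi-implication of the weak $\Sigma$-rigid condition, using the NI hypothesis to pass nilpotent elements through products freely. First recall what each assumption buys us. Since $R$ is NI, $\mathrm{nil}(R)$ is an ideal, so $\mathrm{nil}(R)$ is closed under left and right multiplication by arbitrary elements of $R$ and the quotient $R/\mathrm{nil}(R)$ is reduced; in particular, if $uv\in\mathrm{nil}(R)$ then $vu\in\mathrm{nil}(R)$, and $xy\in\mathrm{nil}(R)$ whenever $x\in\mathrm{nil}(R)$. Since $R$ is weak $\Sigma$-rigid, for every $a\in R$ and every $\theta\in\mathbb{N}^n$ we have $a\in\mathrm{nil}(R)\iff a\sigma^\theta(a)\in\mathrm{nil}(R)$. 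I would first isolate the following auxiliary fact, which is the real engine: \emph{if $ab\in\mathrm{nil}(R)$ then $ba\in\mathrm{nil}(R)$}, and more importantly \emph{$\sigma_i(ab)\in\mathrm{nil}(R)$ whenever $ab\in\mathrm{nil}(R)$} — the latter because each $\sigma_i$ is an endomorphism, so it sends a nilpotent element to a nilpotent element, hence $\mathrm{nil}(R)$ is $\sigma_i$-invariant for every $i$, and therefore $\sigma^\theta(\mathrm{nil}(R))\subseteq\mathrm{nil}(R)$ for all $\theta$.

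For part (1), suppose $ab\in\mathrm{nil}(R)$. To show $a\sigma^\alpha(b)\in\mathrm{nil}(R)$, consider the element $u:=a\sigma^\alpha(b)$. I would compute $u\,\sigma^\alpha(u) = a\sigma^\alpha(b)\sigma^\alpha\!\bigl(a\sigma^\alpha(b)\bigr) = a\sigma^\alpha(b)\sigma^\alpha(a)\sigma^{2\alpha}(b)$, and then, using that $\mathrm{nil}(R)$ is an ideal, rewrite $\sigma^\alpha(ba)=\sigma^\alpha(b)\sigma^\alpha(a)$: since $ab\in\mathrm{nil}(R)$ gives $ba\in\mathrm{nil}(R)$, applying $\sigma^\alpha$ gives $\sigma^\alpha(b)\sigma^\alpha(a)\in\mathrm{nil}(R)$, and since $\mathrm{nil}(R)$ absorbs multiplication on both sides, $u\,\sigma^\alpha(u) = a\bigl(\sigma^\alpha(b)\sigma^\alpha(a)\bigr)\sigma^{2\alpha}(b)\in\mathrm{nil}(R)$. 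The weak $\Sigma$-rigid condition applied to $u$ with exponent $\alpha$ then yields $u=a\sigma^\alpha(b)\in\mathrm{nil}(R)$, as desired. For $\sigma^\beta(a)b\in\mathrm{nil}(R)$: from $ab\in\mathrm{nil}(R)$ we get $ba\in\mathrm{nil}(R)$, so by the case just proved (with the roles of $a,b$ reversed) $b\sigma^\beta(a)\in\mathrm{nil}(R)$, and transposing once more via the NI property gives $\sigma^\beta(a)b\in\mathrm{nil}(R)$.

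For parts (2) and (3), I would run the converse direction. Suppose $\sigma^\alpha(a)b\in\mathrm{nil}(R)$ for some $\alpha$. Set $v:=ab$. Then $\sigma^\alpha(v)=\sigma^\alpha(a)\sigma^\alpha(b)$, and I want to massage $\sigma^\alpha(a)b\in\mathrm{nil}(R)$ into the statement $v\in\mathrm{nil}(R)$ via weak $\Sigma$-rigidity applied to $v$ with exponent $\alpha$, i.e. show $v\,\sigma^\alpha(v)=ab\,\sigma^\alpha(a)\sigma^\alpha(b)\in\mathrm{nil}(R)$. The middle factor $b\,\sigma^\alpha(a)$: from $\sigma^\alpha(a)b\in\mathrm{nil}(R)$ the NI property gives $b\,\sigma^\alpha(a)\in\mathrm{nil}(R)$, so $v\,\sigma^\alpha(v)=a\bigl(b\,\sigma^\alpha(a)\bigr)\sigma^\alpha(b)\in\mathrm{nil}(R)$, whence $v=ab\in\mathrm{nil}(R)$, and then $ba\in\mathrm{nil}(R)$ again by NI. Part (3) is entirely symmetric: if $a\sigma^\alpha(b)\in\mathrm{nil}(R)$, then $\sigma^\alpha(b)a\in\mathrm{nil}(R)$ by NI, which is $\sigma^\alpha(b)\cdot a$, so part (2) with $(a,b)$ replaced by $(b,a)$ gives $ba,ab\in\mathrm{nil}(R)$.

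The main obstacle I anticipate is purely bookkeeping rather than conceptual: one must be scrupulous about \emph{which} endomorphism evaluation appears where and make sure every time a factor is declared nilpotent it really is of the form $\sigma^\gamma(\text{something already known nilpotent})$, so that $\sigma$-invariance of $\mathrm{nil}(R)$ applies; the NI hypothesis is what lets us then freely re-associate and move that factor to the middle of a product. It would be worth stating once, cleanly, the two facts ``$\mathrm{nil}(R)$ is a two-sided ideal'' and ``$\sigma^\gamma(\mathrm{nil}(R))\subseteq\mathrm{nil}(R)$'' at the top of the proof so that the three parts become short.
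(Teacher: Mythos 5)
Your proof is correct, and the engine is the same as the paper's: sandwich an element already known to be nilpotent in the middle of $u\,\sigma^{\theta}(u)$, use that ${\rm nil}(R)$ is a two-sided ideal (the NI hypothesis) to conclude $u\,\sigma^{\theta}(u)\in{\rm nil}(R)$, and then invoke the weak $\Sigma$-rigid biconditional to get $u\in{\rm nil}(R)$. The execution differs in a way worth noting. The paper works one endomorphism at a time: in (1) it proves $b\sigma_i(a)\in{\rm nil}(R)$ for each single $\sigma_i$ via $b\sigma_i(a)\,\sigma_i(b\sigma_i(a))=b\,\sigma_i(ab)\,\sigma_i^{2}(a)$ and then iterates to reach a general $\sigma^{\beta}$, and in (2) it first applies part (1) to get $\sigma^{\alpha}(ab)\in{\rm nil}(R)$ and then peels the powers of the $\sigma_i$'s off one at a time using the rigidity condition at each step. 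You instead apply the definition once with the full multi-index $\theta=\alpha$, which Definition \ref{weaksigmarigidring} permits since the biconditional is quantified over every $\theta\in\mathbb{N}^{n}$; this makes your part (2) noticeably shorter and avoids the paper's detour through part (1). Two small points: your expression $\sigma^{\alpha}\bigl(\sigma^{\alpha}(b)\bigr)=\sigma^{2\alpha}(b)$ presumes the $\sigma_i$ commute, which is not assumed, but this is harmless since that factor only needs to be \emph{some} ring element absorbed by the ideal ${\rm nil}(R)$; and the transposition fact $uv\in{\rm nil}(R)\Rightarrow vu\in{\rm nil}(R)$ holds in any ring via $(vu)^{k+1}=v(uv)^{k}u$, so you do not even need NI for that particular step.
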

\begin{proof}
(1) Let $ab\in {\rm nil}(R)$. Using that $\sigma_i(ab)=\sigma_i(a)\sigma_i(b)\in {\rm nil}(R)$, for every $1\le i\le n$, where ${\rm nil}(R)$ is an ideal of $R$, we obtain $b\sigma_i(a)\sigma_i(b)\sigma_i^{2}(a)=b\sigma_i(a)\sigma_i(b\sigma_i(a))\in {\rm nil}(R)$ which shows that $b\sigma_i(a)\in {\rm nil}(R)$ whence $\sigma_i(a)b\in {\rm nil}(R)$, for all $i$. If we consider repeatedly this argument, then we obtain that $\sigma^{\beta}(a)b\in {\rm nil}(R)$, for all $\beta\in \mathbb{N}^{n}$. In a similar way, if $ab\in {\rm nil}(R)$, then $ba\in {\rm nil}(R)$, and so $\sigma^{\alpha}(b)a\in {\rm nil}(R)$ which implies that $a\sigma^{\alpha}(b)\in {\rm nil}(R)$, for every element $\alpha \in \mathbb{N}^{n}$.

(2) Suppose that $\sigma^{\alpha}(a)b\in {\rm nil}(B)$, for some element $\alpha\in \mathbb{N}^{n}$. We have $\sigma^{\alpha}(a)\sigma^{\alpha}(b) = \sigma^{\alpha}(ab) = \sigma_1^{\alpha_1}(\sigma_2^{\alpha_2}(\dotsb (\sigma^{\alpha_n}(ab)))) = \sigma_1(\sigma_1^{\alpha_1-1}(\sigma_2^{\alpha_2}(\dotsb (\sigma^{\alpha_n}(ab))))) \in {\rm nil}(R)$, by part (1). Since ${\rm nil}(R)$ is an ideal of $R$, $(\sigma_1^{\alpha_1-1}(\sigma_2^{\alpha_2}(\dotsb (\sigma^{\alpha_n}(ab)))))\cdot \sigma_1(\sigma_1^{\alpha_1-1}(\sigma_2^{\alpha_2}(\dotsb (\sigma^{\alpha_n}(ab)))))\in {\rm nil}(R)$,  whence we obtain $\sigma_1^{\alpha_1-1}(\sigma_2^{\alpha_2}(\dotsb (\sigma^{\alpha_n}(ab))))$ by definition of weak $\Sigma$-rigid ring. Continuing in this way we can prove that $\sigma_2^{\alpha_2}(\dotsb (\sigma^{\alpha_n}(ab)))\in {\rm nil}(R)$. Again, continuing this procedure we can see that $ab\in {\rm nil}(R)$.

(3) The proof uses a similar argument to the considered in part (2).
\end{proof}
The next proposition generalizes \cite{Ouyang2008}, Proposition 2.4 (compare also with \cite{Reyes2015}, Proposition 3.5).
\begin{proposition}\label{2008Proposition2.4}
If $R$ is a NI and weak $\Sigma$-rigid ring, then $\sigma_i(e) = e\ (1\le i\le n)$, for every central idempotent element $e\in R$.
\end{proposition}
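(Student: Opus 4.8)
The plan is to exploit the key feature of a weak $\Sigma$-rigid ring: an element $a$ lies in ${\rm nil}(R)$ if and only if $a\sigma^{\theta}(a)\in {\rm nil}(R)$, together with the fact that in an NI ring ${\rm nil}(R)$ is a two-sided ideal. Let $e\in R$ be a central idempotent and fix $i$ with $1\le i\le n$. I would begin by setting $a:=e-\sigma_i(e)$ (or, to mirror Ouyang's argument, $a:=\sigma_i(e)-\sigma_i(e)e$ and $a':=e-\sigma_i(e)e$, whichever makes the bookkeeping cleanest) and aim to show that $a\in{\rm nil}(R)$, then upgrade this to $a=0$.

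First I would record the elementary identities: since $e^2=e$, applying $\sigma_i$ gives $\sigma_i(e)^2=\sigma_i(e)$, so $\sigma_i(e)$ is idempotent; and since $e$ is central, $e$ commutes with everything, in particular with $\sigma_i(e)$. Next I would compute $a\sigma_i(a)$ for the chosen $a$. For instance with $a=\sigma_i(e)-\sigma_i(e)e$ one has $\sigma_i(a)=\sigma_i^2(e)-\sigma_i^2(e)\sigma_i(e)=\sigma_i^2(e)(1-\sigma_i(e))$, and using idempotency and centrality the product $a\sigma_i(a)$ collapses: $a\sigma_i(a)=\sigma_i(e)(1-e)\sigma_i^2(e)(1-\sigma_i(e))$; since ${\rm nil}(R)$ is an ideal and $(1-\sigma_i(e))$ annihilates $\sigma_i(e)$ on the appropriate side after moving factors past the central $e$, this lands in ${\rm nil}(R)$ — indeed I expect it to be outright $0$ once the central element $e$ is commuted through and $\sigma_i(e)(1-\sigma_i(e))=0$ is used. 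Then the weak $\Sigma$-rigid hypothesis (applied with $\theta=\varepsilon_i$, the $i$-th unit vector, so $\sigma^\theta=\sigma_i$) gives $a\in{\rm nil}(R)$. Symmetrically, setting $a'=e-\sigma_i(e)e$ and running the same computation yields $a'\in{\rm nil}(R)$.

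Having $e-\sigma_i(e)e\in{\rm nil}(R)$ and $\sigma_i(e)-\sigma_i(e)e\in{\rm nil}(R)$, I would subtract: $e-\sigma_i(e)=(e-\sigma_i(e)e)-(\sigma_i(e)-\sigma_i(e)e)\in{\rm nil}(R)$, since ${\rm nil}(R)$ is closed under subtraction (NI). Now the final step is to promote nilpotence of $b:=e-\sigma_i(e)$ to $b=0$. Here I would use that $e$ is central and idempotent and $\sigma_i(e)$ is idempotent, so $b=e-\sigma_i(e)$ satisfies $b^3 = b$ in fact one computes $b^2=e-e\sigma_i(e)-\sigma_i(e)e+\sigma_i(e)=e+\sigma_i(e)-2e\sigma_i(e)$ and $b^3=b$ using centrality of $e$; an element that is both nilpotent and satisfies $b^3=b$ must be $0$ (if $b^k=0$ and $b^3=b$ then $b=b^3=b^5=\dotsb=b^{3^m}=0$ for large $m$). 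Hence $\sigma_i(e)=e$.

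The main obstacle, and the place to be careful, is the algebraic manipulation verifying that the relevant product $a\sigma_i(a)$ genuinely lies in ${\rm nil}(R)$ (ideally equals $0$) using only idempotency of $e$ and $\sigma_i(e)$ plus the centrality of $e$ — one must not accidentally assume $\sigma_i(e)$ is central, which is precisely what is being proved. A secondary subtlety is making sure the NI hypothesis is invoked correctly: it is needed both to know ${\rm nil}(R)$ absorbs the extra factors when deducing $a\sigma_i(a)\in{\rm nil}(R)$ from a weaker relation, and to know ${\rm nil}(R)$ is closed under the subtraction step. If the direct computation of $a\sigma_i(a)=0$ proves awkward, the fallback is to show $a\sigma_i(a)\in{\rm nil}(R)$ using Proposition~\ref{2008Proposition2.3} (moving $\sigma$'s across products inside ${\rm nil}(R)$), which is exactly the tool assembled for this purpose.
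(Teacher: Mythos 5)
Your fallback route is correct and is essentially the paper's argument, but your primary computation does not work, for exactly the reason you feared. In the product $a\sigma_i(a)=\sigma_i(e)(1-e)\,\sigma_i^2(e)\,(1-\sigma_i(e))$ the factor $\sigma_i^2(e)$ sits between $\sigma_i(e)$ and $(1-\sigma_i(e))$; only $e$ (hence $1-e$) is known to be central, so you cannot commute $\sigma_i^2(e)$ out of the way to reach $\sigma_i(e)(1-\sigma_i(e))=0$. Moreover the whole detour through $a\sigma_i(a)$ and the weak rigidity condition is unnecessary: Proposition~\ref{2008Proposition2.3}(1) applied to $e(1-e)=0\in{\rm nil}(R)$ gives $a=\sigma_i(e)(1-e)\in{\rm nil}(R)$ \emph{directly}, and likewise $(1-e)e=0$ gives $a'=(1-\sigma_i(e))e\in{\rm nil}(R)$; this is precisely where the NI and weak $\Sigma$-rigid hypotheses enter, since they are the hypotheses of that proposition.

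From there your finish is valid but longer than the paper's. You subtract (legitimate, since ${\rm nil}(R)$ is an ideal by NI) to get $b=e-\sigma_i(e)\in{\rm nil}(R)$, verify $b^3=b$ using centrality of $e$ and idempotency of $e$ and $\sigma_i(e)$, and conclude $b=0$. The paper instead observes that $a$ and $a'$ are themselves idempotent: since $1-e$ is a central idempotent, $(\sigma_i(e)(1-e))^2=\sigma_i(e)^2(1-e)^2=\sigma_i(e)(1-e)$, and a nilpotent idempotent is $0$, so $\sigma_i(e)=\sigma_i(e)e$ and $e=\sigma_i(e)e$ on the nose, whence $\sigma_i(e)=e$ with no subtraction and no $b^3=b$ computation. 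Both finishes are sound; the paper's is the more economical, and your $b^3=b$ trick is a fine alternative provided you first secure $a,a'\in{\rm nil}(R)$ via Proposition~\ref{2008Proposition2.3} rather than via the broken direct computation.
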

\begin{proof}
Consider $e$ a central idempotent of $R$. It is clear that $e(1-e)=0$. By Proposition \ref{2008Proposition2.3} (1) we obtain $\sigma_i(e)(1-e)\in {\rm nil}(R)$, for $1\le i\le n$. This means that there exists some positive integer $k$ such that $0=(\sigma_i(e)(1-e))^{k} = \sigma_i(e)(1-e)$ (for a fixed $i$). In this way $\sigma_i(e) = \sigma_i(e)e$, for all $i$. In a similar way, $(1-e)e=0\Rightarrow \sigma_i(1-e)e=0$, and so $e=\sigma_i(e)e$, whence $\sigma_i(e)=e$, for all $i$.
\end{proof}
With the aim of establishing the following proposition, an ideal $I$ of $R$ will be called {\em weak} $\Sigma$-rigid, if $a\sigma^{\theta}(a)\in {\rm nil}(R) \Leftrightarrow a\in {\rm nil}(R)$, for every element $a\in I$ and each $\theta\in \mathbb{N}^{n}$. Our Proposition \ref{2008Proposition2.5} extends \cite{Ouyang2008}, Proposition 2.5.
\begin{proposition}\label{2008Proposition2.5}
If $R$ is an abelian ring with $\sigma_i(e)=0$ $(1\le i\le n)$, for every idempotent element $e$ of $R$, then the following assertions are equivalent:
\begin{enumerate}
\item [\rm (1)] $R$ is weak $\Sigma$-rigid.
\item [\rm (2)] $eR$ and $(1-e)R$ are weak $\Sigma$-rigid ideals.
\end{enumerate}
\end{proposition}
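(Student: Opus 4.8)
The plan is to reduce the weak $\Sigma$-rigidity of $R$ to that of the ``corner'' ideals $eR$ and $(1-e)R$ by using the orthogonal decomposition $R=eR\oplus(1-e)R$ determined by the central idempotent $e$ (since $R$ is abelian, $e$ and $1-e$ are central, so $eR$ and $(1-e)R$ are genuine two-sided ideals of $R$, $eR\cap(1-e)R=0$, and each is closed under multiplication). The essential extra input is that every $\sigma_i$ fixes the idempotents of $R$, so that $\sigma^{\theta}(e)=e$ for all $\theta\in\mathbb{N}^n$, and hence $\sigma^{\theta}$ also respects this decomposition.

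First I would isolate an elementary lemma: \emph{if $e$ is a central idempotent of $R$, $x\in eR$ and $y\in(1-e)R$, then $xy=yx=0$, and consequently $x+y\in{\rm nil}(R)$ if and only if both $x\in{\rm nil}(R)$ and $y\in{\rm nil}(R)$.} The orthogonality $xy=yx=0$ is immediate from centrality of $e$. For the nilpotency statement, if $x^p=0=y^q$, then in the expansion of $(x+y)^{p+q}$ every monomial involving both $x$ and $y$ contains two adjacent factors $xy$ or $yx$ and so vanishes, leaving $x^{p+q}+y^{p+q}=0$; conversely, if $(x+y)^m=0$ the same cancellation gives $x^m+y^m=0$ with $x^m\in eR$, $y^m\in(1-e)R$, and $eR\cap(1-e)R=0$ forces $x^m=y^m=0$. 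This is exactly the point where one avoids needing $R$ to be NI: nilpotency of a sum of two orthogonal elements is controlled directly.

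The implication $(1)\Rightarrow(2)$ is then immediate, because the defining equivalence ``$a\sigma^{\theta}(a)\in{\rm nil}(R)\Leftrightarrow a\in{\rm nil}(R)$'' holding for all $a\in R$ restricts in particular to every $a\in eR$ and every $a\in(1-e)R$. For $(2)\Rightarrow(1)$, fix $a\in R$ and $\theta\in\mathbb{N}^n$, and write $a_1:=ea\in eR$, $a_2:=(1-e)a\in(1-e)R$, so $a=a_1+a_2$. Using $\sigma^{\theta}(e)=e$ (hence $\sigma^{\theta}(1-e)=1-e$) one gets $\sigma^{\theta}(a_1)=e\sigma^{\theta}(a)\in eR$ and $\sigma^{\theta}(a_2)=(1-e)\sigma^{\theta}(a)\in(1-e)R$; by the lemma the cross terms $a_1\sigma^{\theta}(a_2)$ and $a_2\sigma^{\theta}(a_1)$ vanish, so
\[
a\sigma^{\theta}(a)=a_1\sigma^{\theta}(a_1)+a_2\sigma^{\theta}(a_2),
\]
with $a_1\sigma^{\theta}(a_1)\in eR$ and $a_2\sigma^{\theta}(a_2)\in(1-e)R$.

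Finally I would chain the equivalences: by the lemma, $a\sigma^{\theta}(a)\in{\rm nil}(R)$ if and only if $a_1\sigma^{\theta}(a_1)\in{\rm nil}(R)$ and $a_2\sigma^{\theta}(a_2)\in{\rm nil}(R)$; since $eR$ and $(1-e)R$ are weak $\Sigma$-rigid ideals and $a_1\in eR$, $a_2\in(1-e)R$, this is equivalent to $a_1\in{\rm nil}(R)$ and $a_2\in{\rm nil}(R)$; and by the lemma once more this is equivalent to $a=a_1+a_2\in{\rm nil}(R)$. Hence $R$ is weak $\Sigma$-rigid. The only delicate point --- and the one genuinely using the hypothesis on idempotents --- is the vanishing of the cross terms: without $\sigma_i(e)=e$ the image $\sigma^{\theta}(a_2)$ need not lie in $(1-e)R$, and then $a\sigma^{\theta}(a)$ does not split cleanly along the decomposition. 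Everything else is bookkeeping with the orthogonal idempotents $e$ and $1-e$.
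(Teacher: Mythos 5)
Your proof is correct and follows essentially the same route as the paper's: decompose $a$ along the central idempotent $e$, use $\sigma^{\theta}(e)=e$ so that $\sigma^{\theta}$ respects the splitting $R=eR\oplus(1-e)R$, and transfer nilpotency componentwise between $a\sigma^{\theta}(a)$ and its two pieces $ea\sigma^{\theta}(ea)$, $(1-e)a\sigma^{\theta}((1-e)a)$. One remark: the hypothesis as printed reads $\sigma_i(e)=0$, which must be a typo for $\sigma_i(e)=e$ (taking $e=1$ would otherwise force $\sigma_i(1)=0$), and indeed both your argument and the paper's rely on $\sigma_i(e)=e$; your single chain of equivalences via the orthogonality lemma is marginally cleaner than the paper's two separate directions with explicit nilpotency exponents, but it is the same idea.
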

\begin{proof}
$(1)\Rightarrow (2)$ It is clear since $eR$ and $(1-e)R$ are subrings of $R$.

$(2)\Rightarrow (1)$ Let $a$ be a nilpotent element of $R$. Then $ea, (1-e)a\in {\rm nil}(R)$. Having in mind that $eR$ and $(1-e)R$ are weak $\Sigma$-rigid, there exist positive integers $k, l$ with $(ea\sigma_i(ea))^{k} = e(a\sigma_i(a))^{k} = 0$ and $((1-e)a\sigma_i((1-e)a))^{l} = (1-e)(a\sigma_i(a))^{l}$, for a fixed $i$. If we take $m:={\rm max}\{k, l\}$, then $e(a\sigma_i(a))^{m} = (1-e)(a\sigma_i(a))^{m} = 0$. Therefore $(a\sigma_i(a)))^{m} = 0$, that is, $a\sigma_i(a)\in {\rm nil}(R)$, for all $i$.

Conversely, suppose that $a\sigma^{\theta}(a)\in {\rm nil}(R)$, for $\theta\in \mathbb{N}^{n}$. Then $ea\sigma^{\theta}(ea)\in {\rm nil}(R)$ and $(1-e)a\sigma^{\theta}((1-e)a) \in {\rm nil}(R)$. So, $ea\in {\rm nil}(R)$ and $(1-e)a\in {\rm nil}(R)$, since $eR$ and $(1-e)R$ are weak $\Sigma$-rigid ideals. Hence $a\in {\rm nil}(R)$, that is, $R$ is weak $\Sigma$-rigid.
\end{proof}
\section{Weak $\Sigma$-skew Armendariz rings}\label{weakSigmaskewArmendariz}
In the literature we find the following notions about Armendariz rings in commutative and noncommutative case concerning Ore extensions.
\begin{definition}\label{cardona}
\begin{enumerate}
\item [\rm (i)] (\cite{LiuZhao2006}, Definition 2.1) A ring $B$ is called {\em weak Armendariz}, if whenever polynomials $f=\sum_{i=0}^{s} a_ix^{i}$ and $g=\sum_{j=0}^{t} b_jx^{j} \in B[x]$ satisfy $pq=0$, then $a_ib_j\in {\rm nil}(B)$, for each $i, j$.
\item [\rm (ii)] (\cite{HongKimKwak2003}, p. 104) $B$ is called $\alpha$-{\em skew Armendariz}, if whenever $f=\sum_{i=0}^{s} a_ix^{i}$ and $g=\sum_{j=0}^{t}b_jx^{j}\in B[x;\alpha]$ with $fg=0$, then  $a_i\sigma^{i}(b_j)=0$, for every $i, j$.
\item [\rm (iii)] (\cite{Ouyang2008}, p. 110) $B$ is called {\em weak} $\alpha$-skew Armendariz, if whenever $f=\sum_{i=0}^{s} a_ix^{i}$ and $g=\sum_{j=0}^{t} b_jx^{j}\in B[x;\alpha]$ satisfy $fg=0$, then $a_i\alpha^{i}(b_j)\in {\rm nil}(B)$.\end{enumerate}
\end{definition}
In the context of skew PBW extensions, the authors have defined the following Armendariz notions:
\begin{definition}
Let $A$ be a skew PBW extension over a ring $R$. Then:
\begin{enumerate}
\item [\rm (i)] (\cite{NinoReyes2017}, Definition 3.4) $R$ is called a $(\Sigma, \Delta)$-{\em skew Armendariz ring}, if whenever $f = \sum_{i=0}^{t} a_iX_i$, $g = \sum_{j=0}^{s} b_jY_j\in A$ with $fg=0$, then $a_iX_ib_jY_j=0$, for every value of $i$ and $j$.
\item [\rm (ii)](\cite{ReyesSuarez2016UIS}, Definition 3.1) $R$ is called a $\Sigma$-{\em skew Armendariz ring}, if for elements $f=\sum_{i=0}^{m} a_iX_i$ and $g=\sum_{j=0}^{t} b_jY_j$ in $A$, the equality $fg=0$ implies $a_i\sigma^{\alpha_i}(b_j) = 0$, for all $0\le i\le m$ and $0\le j\le t$, where $\alpha_i = {\rm exp}(X_i)$.
\item [\rm (iii)] (\cite{ReyesSuarezClifford2017}, Definition 4.1) $R$ is a {\em skew-Armendariz} ring, if for polynomials $f=a_0+a_1X_1+\dotsb + a_mX_m$ and $g=b_0+b_1Y_1 + \dotsb + b_tY_t$ in $A$, $fg=0$ implies $a_0b_k=0$, for each $0\le k\le t$. 
\item [\rm (iv)] (\cite{Reyes2018}, Definition 3.1) $R$ is called a {\em skew}-$\Pi$ {\em Armendariz ring}, if for elements $f=\sum_{i=0}^{m} a_iX_i,\ g=\sum_{j=0}^{t} b_jY_j$ of $A$, $fg\in {\rm nil}(A)$ implies that $a_ib_j\in {\rm nil}(R)$, for every $0\le i\le m$ and $0\le j\le t$.
\end{enumerate}
\end{definition}
Several relations about these four notions of Armendariz rings for coefficient rings of skew PBW extensions can be found in \cite{ReyesSuarez2016UIS}, Section 3, \cite{ReyesSuarezClifford2017}, Sections 3 and 4, and \cite{Reyes2018}, Section 3. Now, with the aim of extending Definition \ref{cardona} (iii) from Ore extensions of endomorphism type to skew PBW extensions of endomorphism type (which are more general, see Examples \ref{mentioned}), and also $\Sigma$-skew Armendariz rings defined by the first author in \cite{ReyesSuarez2016UIS}, Definition 3.1, (at least in the endomorphism case), we present the following definition.
\begin{definition}\label{weaksigmaskewArmendarizPBW}
Let $A$ be a skew PBW extension of endomorphism type over  a ring $R$. $R$ is called a {\em weak} $\Sigma$-{\em skew Armendariz ring}, if for elements $f=\sum_{i=0}^{m} a_iX_i$ and $g=\sum_{j=0}^{t} b_jY_j$ in $A$, the equality $fg=0$ implies $a_i\sigma^{\alpha_i}(b_j)\in {\rm nil}(R)$, for all $0\le i\le m$ and $0\le j\le t$, where $\alpha_i = {\rm exp}(X_i)$.
\end{definition}
The following theorem extends \cite{Ouyang2008}, Theorem 3.3. We need to assume that the elements $c_{i,j}$ in Definition \ref{2008Proposition2.3} (iv) are both central and invertible in $R$. We denote ${\rm nil}(R)A:=\{f\in A\mid f= a_0 + a_1X_1 + \dotsb + a_mX_m,\ a_i\in {\rm nil}(R)\}$.
\begin{theorem}\label{2008Theorem3.3}
If $R$ is a NI and weak $\Sigma$-rigid ring, then $R$ is a weak $\Sigma$-skew Armendariz ring.
\end{theorem}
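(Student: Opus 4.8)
Since $A$ is of endomorphism type, every $\delta_i$ is zero, so by Remark \ref{juradpr} (all the $p$-terms vanish) each product occurring in $fg$ collapses to $a_iX_ib_jY_j=a_i\sigma^{\alpha_i}(b_j)\,x^{\alpha_i}x^{\beta_j}$, where $\alpha_i=\exp(X_i)$ and $\beta_j=\exp(Y_j)$. The plan is first to put each $x^{\alpha_i}x^{\beta_j}$ in standard form: repeatedly replace $x_kx_l$ $(l<k)$ by $c_{l,k}x_lx_k$ plus terms of strictly smaller degree as in Definition \ref{gpbwextension}(iv), and use $x_\ell^{a}r=\sigma_\ell^{a}(r)x_\ell^{a}$ (a special case of Proposition \ref{lindass}) to move coefficients; taking $\succeq$ degree-compatible, one gets $x^{\alpha_i}x^{\beta_j}=d_{i,j}\,x^{\alpha_i+\beta_j}+(\text{an }R\text{-combination of monomials }x^{\gamma}\text{ with }\gamma\prec\alpha_i+\beta_j)$, where $d_{i,j}\in R$ is a product of $\sigma$-iterates of the $c_{k,l}$, hence a unit. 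Therefore
\[
0=fg=\sum_{i,j}a_i\sigma^{\alpha_i}(b_j)\,d_{i,j}\,x^{\alpha_i+\beta_j}\ +\ (\text{terms }a_i\sigma^{\alpha_i}(b_j)\,r\,x^{\gamma},\ r\in R,\ \gamma\prec\alpha_i+\beta_j).
\]
Write $L:=\{\alpha_i+\beta_j:0\le i\le m,\ 0\le j\le t\}$, a finite set totally ordered by $\succeq$, with maximum $\alpha_m+\beta_t$ attained only at $(i,j)=(m,t)$. I would prove $a_i\sigma^{\alpha_i}(b_j)\in{\rm nil}(R)$ for all $i,j$ by descending induction over $L$.

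For the base exponent $\gamma=\alpha_m+\beta_t$, only the product $a_mX_mb_tY_t$ can contribute the monomial $x^{\alpha_m+\beta_t}$, so comparing coefficients gives $a_m\sigma^{\alpha_m}(b_t)d_{m,t}=0$, whence $a_m\sigma^{\alpha_m}(b_t)=0$ since $d_{m,t}$ is a unit. For the inductive step, fix $\gamma\in L$ and assume $a_i\sigma^{\alpha_i}(b_j)\in{\rm nil}(R)$ whenever $\alpha_i+\beta_j\succ\gamma$. Comparing the coefficients of $x^{\gamma}$ on the two sides of $fg=0$ yields
\[
\sum_{\alpha_i+\beta_j=\gamma}a_i\sigma^{\alpha_i}(b_j)\,d_{i,j}\ +\ C\ =\ 0,
\]
where $C$ gathers the contributions of the lower-degree terms of the $x^{\alpha_i}x^{\beta_j}$ with $\alpha_i+\beta_j\succ\gamma$; each such contribution has the form $a_i\sigma^{\alpha_i}(b_j)\,r$ with $r\in R$ and $a_i\sigma^{\alpha_i}(b_j)\in{\rm nil}(R)$ by the induction hypothesis, so—$R$ being NI, ${\rm nil}(R)$ is a two-sided ideal—$C\in{\rm nil}(R)$, and hence $\sum_{\alpha_i+\beta_j=\gamma}a_i\sigma^{\alpha_i}(b_j)d_{i,j}\in{\rm nil}(R)$.

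It remains to split this membership into its individual summands. List the pairs at level $\gamma$ as $(i_1,j_1),\dots,(i_r,j_r)$ with $i_1>\dots>i_r$ (equivalently $j_1<\dots<j_r$) and set $u_s:=a_{i_s}\sigma^{\alpha_{i_s}}(b_{j_s})$; I would strip off $u_r,u_{r-1},\dots,u_1$ in turn. Assuming $u_{s+1},\dots,u_r\in{\rm nil}(R)$, one has $\sum_{l=1}^{s}u_l\,d_{i_l,j_l}\in{\rm nil}(R)$, and I multiply this on the left by $\sigma^{\alpha_{i_s}}(b_{j_s})$. For $l<s$ the pair $(i_l,j_s)$ has level $\alpha_{i_l}+\beta_{j_s}\succ\alpha_{i_l}+\beta_{j_l}=\gamma$ (since $j_s>j_l$ forces $\beta_{j_s}\succ\beta_{j_l}$), so $a_{i_l}\sigma^{\alpha_{i_l}}(b_{j_s})\in{\rm nil}(R)$ by the induction hypothesis; then $a_{i_l}b_{j_s}\in{\rm nil}(R)$ by Proposition \ref{2008Proposition2.3}(3), hence $b_{j_s}a_{i_l}\in{\rm nil}(R)$, hence $\sigma^{\alpha_{i_s}}(b_{j_s})a_{i_l}\in{\rm nil}(R)$ by Proposition \ref{2008Proposition2.3}(1), so all the summands with $l<s$ lie in ${\rm nil}(R)$. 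What survives is $\sigma^{\alpha_{i_s}}(b_{j_s})\,a_{i_s}\,\sigma^{\alpha_{i_s}}(b_{j_s})\,d_{i_s,j_s}\in{\rm nil}(R)$; using invariance of nilpotence under cyclic permutation and that $d_{i_s,j_s}$ is a unit, this reduces to $a_{i_s}\,\sigma^{\alpha_{i_s}}(b_{j_s})^{2}=a_{i_s}\,\sigma^{\alpha_{i_s}}(b_{j_s}^{2})\in{\rm nil}(R)$. Proposition \ref{2008Proposition2.3}(3) gives $a_{i_s}b_{j_s}^{2}\in{\rm nil}(R)$; the elementary NI fact that $xy^{2}\in{\rm nil}(R)$ implies $xy\in{\rm nil}(R)$ gives $a_{i_s}b_{j_s}\in{\rm nil}(R)$; and Proposition \ref{2008Proposition2.3}(1) returns $u_s\in{\rm nil}(R)$. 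Since $L$ is finite the induction terminates, and $R$ is weak $\Sigma$-skew Armendariz.

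The hard part is this innermost peeling: one must hit the congruence at level $\gamma$ with precisely $\sigma^{\alpha_{i_s}}(b_{j_s})$ so that every rival summand is absorbed into ${\rm nil}(R)$ while the wanted summand survives in the tractable shape $a_{i_s}\sigma^{\alpha_{i_s}}(b_{j_s}^{2})$, from which Proposition \ref{2008Proposition2.3} plus the NI property recover $u_s$; keeping the monomial bookkeeping and the order comparisons $\alpha_{i_l}+\beta_{j_s}\succ\gamma$ straight is where the real work lies. The NI hypothesis is used throughout—it is what lets ${\rm nil}(R)$ absorb one-sided products and cyclic rearrangements—and the assumption that the $c_{i,j}$ are central and invertible is what makes each reordering coefficient $d_{i,j}$ a unit and keeps the correction terms $C$ inside ${\rm nil}(R)$. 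As an alternative, one may factor the argument through $R/{\rm nil}(R)$, which is reduced and $\Sigma$-rigid by Theorem \ref{2008Proposition2.2}, once it is known that $\Sigma$-rigid rings (in this endomorphism setting) are $\Sigma$-skew Armendariz.
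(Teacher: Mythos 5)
Your proof is correct and follows essentially the same strategy as the paper's: a descending induction starting from the leading coefficient of $fg=0$, with Proposition \ref{2008Proposition2.3} and the NI hypothesis (so that ${\rm nil}(R)$ is an ideal) used to absorb all cross-terms, and the centrality and invertibility of the $c_{i,j}$ used to cancel the reordering coefficients. The only differences are organizational --- you index the induction by the exponents $\alpha_i+\beta_j$ under the monomial order rather than by the index sums $i+j$, and you isolate each summand by left-multiplying by $\sigma^{\alpha_{i_s}}(b_{j_s})$ and then using cyclic permutation of nilpotents together with $ab^{2}\in {\rm nil}(R)\Rightarrow ab\in {\rm nil}(R)$, whereas the paper right-multiplies by $a_k, a_{k-1},\dotsc$ --- and your bookkeeping of the within-level splitting is, if anything, more explicit than the paper's.
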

\begin{proof}
Suppose that $fg=0$, where $f=a_0+a_1X_1+\dotsb + a_mX_m$ and $g=b_0+b_1Y_1+\dotsb + b_tY_t$, with the monomial order $X_1\prec X_2\prec \dotsb \prec X_m$ and $Y_1\prec Y_2\prec \dotsb \prec Y_t$, respectively (Definition \ref{definitioncoefficients} (ii)). Since $fg =\sum_{k=0}^{m+t} \biggl(\sum_{i+j=k} a_iX_ib_jY_j\biggr)$, then ${\rm lc}(fg)= a_m\sigma^{\alpha_m}(b_t)c_{\alpha_m, \beta_t}=0$. By assumption, the elements $c_{i,j}$ (Definition \ref{2008Proposition2.3} (iv)) are invertible in $R$, so $c_{\alpha_m,\beta_t}$ are also invertible, and hence ${\rm lc}(fg)= a_m\sigma^{\alpha_m}(b_t)=0$ which means that the element $a_m\sigma^{\alpha_m}(b_t)\in {\rm nil}(R)$. The idea is to prove that $a_p\sigma^{\alpha_p}(b_q)\in {\rm nil}(R)$,  for $p+q\ge 0$.  We proceed  by induction. Suppose that $a_p\sigma^{\alpha_p}(b_q)\in {\rm nil}(R)$, for $p+q=m+t, m+t-1, m+t-2, \dotsc, k+1$ for some $k>0$. From Remark \ref{juradpr} and Proposition \ref{2008Proposition2.3} we obtain that  $a_pX_pb_qY_q$ is an element of ${\rm nil}(R)A$, for these values of $p+q$. In this way we only consider the sum of the products $a_uX_ub_vY_v$, where $u+v=k, k-1,k-2,\dotsc, 0$. Fix $u$ and $v$. Consider the sum of all terms  of $fg$  having exponent $\alpha_u+\beta_v$. By Proposition \ref{lindass}, Remark \ref{juradpr} and the assumption $fg=0$, we know that the sum of all coefficients of all these terms  can be written as
\begin{equation}\label{Federer}
a_u\sigma^{\alpha_u}(b_v)c_{\alpha_u, \beta_v} + \sum_{\alpha_{u'} + \beta_{v'} = \alpha_u + \beta_v} a_{u'}\sigma^{\alpha_{u'}} ({\rm \sigma's\ and\ \delta's\ evaluated\ in}\ b_{v'})c_{\alpha_{u'}, \beta_{v'}} = 0.
\end{equation}
By assumption we know that $a_p\sigma^{\alpha_p}(b_q)\in {\rm nil}(R)$, for $p+q=m+t, m+t-1, \dotsc, k+1$.  So, Proposition \ref{2008Proposition2.3} (1) guarantees that the product 
\[a_p({\rm \sigma's\ and\ \delta's\ evaluated\ in}\ b_{q})\ \ \ \ \ \ \ ({\rm any\  order\ of}\ \sigma's\ {\rm and}\ \delta's)
\]
is an element of ${\rm nil}(R)$. Proposition  \ref{2008Proposition2.3} guarantees that $({\rm \sigma's\ and\ \delta's\ evaluated\ in}\ b_{q})a_p$ is also an element of ${\rm nil}(R)$. In this way, multiplying (\ref{Federer}) by $a_k$, and using the fact that the elements $c_{i,j}$ in Definition \ref{gpbwextension} (iv) are in the center of $R$, 
\begin{equation}\label{doooooo}
a_u\sigma^{\alpha_u}(b_v)a_kc_{\alpha_u, \beta_v} + \sum_{\alpha_{u'} + \beta_{v'} = \alpha_u + \beta_v} a_{u'}\sigma^{\alpha_{u'}} ({\rm \sigma's\ and\ \delta's\ evaluated\ in}\ b_{v'})a_kc_{\alpha_{u'}, \beta_{v'}} = 0,
\end{equation}
whence, $a_u\sigma^{\alpha_u}(b_0)a_k=0$. Since $u+v=k$ and $v=0$, then $u=k$, so $a_k\sigma^{\alpha_k}(b_0)a_k=0$ whence $a_k\sigma^{\alpha_k}(b_0)\in {\rm nil}(R)$ by Proposition \ref{2008Proposition2.3}. Therefore, we now have to study the expression (\ref{Federer}) for $0\le u \le k-1$ and $u+v=k$. If we multiply (\ref{doooooo}) by $a_{k-1}$ we obtain 
{\small{\[
a_u\sigma^{\alpha_u}(b_v)a_{k-1}c_{\alpha_u, \beta_v} + \sum_{\alpha_{u'} + \beta_{v'} = \alpha_u + \beta_v} a_{u'}\sigma^{\alpha_{u'}} ({\rm \sigma's\ and\ \delta's\ evaluated\ in}\ b_{v'})a_{k-1}c_{\alpha_{u'}, \beta_{v'}} = 0.
\]}}
Using a similar reasoning as above, we can see that $a_u\sigma^{\alpha_u}(b_1)a_{k-1}c_{\alpha_u, \beta_1}=0$, and using the assumptions on the elements $c_{\alpha_u,\beta_1}$. Now, since $a_u\sigma^{\alpha_u}(b_1)a_{k-1}=0$, and $u=k-1$, Proposition \ref{2008Proposition2.3} imply that $a_{k-1}\sigma^{\alpha_{k-1}}(b_1)=0$. Continuing in this way, we prove that $a_i\sigma^{\alpha_i}(b_j)\in {\rm nil}(R)$, for $i+j=k$. Therefore $a_i\sigma^{\alpha_i}(b_j)\in {\rm nil}(R)$, for $0\le i\le m$ and $0\le j\le t$.
\end{proof}
\begin{remark}\label{2008Example3.4}
The importance of the condition NI on $R$ in Theorem \ref{2008Theorem3.3} can be appreciated in the following example taken from \cite{Ouyang2008}, Example 3.4, which presents a noncommutative ring which is weak $\Sigma$-rigid but not weak $\Sigma$-skew Armendariz. Let $R$ be a ring and $M_2(R)$ be the $2\times 2$ matrix ring over $R$. Let
\[
S = \biggl \{\begin{pmatrix} A & B\\ 0 & C\end{pmatrix}\mid A, B, C \in M_2(R)  \biggr\}.
\]
It is clear that $S$ is a ring with usual matrix operations. If we consider the endomorphism $\sigma:S\to S$ defined by 
\[
\sigma \biggl ( \begin{pmatrix} A & B\\ 0 & C\end{pmatrix}\biggr) = \biggl (\begin{pmatrix} A & -B\\ 0 & C\end{pmatrix} \biggr),\ \ \ \ \begin{pmatrix} A & B\\ 0 & C\end{pmatrix} \in S,
\] 
then $S$ is weak $\sigma$-rigid but not weak $\sigma$-skew Armendariz.
\end{remark}
\section{Future work}\label{futurework}
Having in mind that $\Sigma$-rigid rings have been studied in several papers concerning ring theoretical properties such as Armendariz, Baer, quasi-Baer, p.p. and p.q.-Baer rings, zip, McCoy, invariant ideals, ascending chain condition on principal left (resp. right) ideals, and others (c.f. \cite{Reyes2015}, \cite{ReyesSuarez2016Boletin}, \cite{ReyesSuarez2016UIS}, \cite{NinoReyes2017}, \cite{ReyesSuarezClifford2017}, \cite{ReyesSuarezUMA2018},  \cite{Reyes2018} and \cite{ReyesSuarezYesica2018}), there is a considerable number of results about $\Sigma$-rigid rings which can be extended to the more general setting of weak $\Sigma$-rigid rings. This will be our line of thinking in future papers.

\vspace{0.5cm}

\noindent {\bf \Large{Acknowledgements}}

\vspace{0.5cm}

The first author was supported by the research fund of Facultad de Ciencias, Universidad Nacional de Colombia, Bogot\'a, Colombia, HERMES CODE 41535.


\end{document}